\DeclareMathOperator{\Ker}{Ker}
\newcommand*{\dif}{\mathop{}\!\mathrm{d}}
\theoremstyle{definition}
\newtheorem{theorem}{Theorem}[section]
\newtheorem{lemma}{Lemma}[section]
\newtheorem{conjecture}{Conjecture}[section]
\newtheorem{question}{Question}[section]
\newtheorem{remark}{Remark}[section]
\title{ A Note on K\"ahler-Ricci flow on Fano threefolds}
\author{Minghao Miao and Gang Tian}
\address{Minghao Miao: Department of Mathematics, Nanjing University, Nanjing 210093, China}
\email{minghao.miao@smail.nju.edu.cn}
\address{Gang Tian: BICMR and SMS, Peking University, Beijing 100871, China}
\email{gtian@math.pku.edu.cn}
\date{\today}
\begin{document}

\begin{abstract}
    In this note, we show that the solution of K\"ahler-Ricci flow on every Fano threefold from the family No.2.23 in the Mori-Mukai's list develops type II
    singularity. In fact, we show that no Fano threefold from the family No.2.23 admits Kähler-Ricci soliton and the Gromov-Hausdorff limit of the K\"ahler-Ricci flow must be a singular $\mathbb{Q}$-Fano variety. This gives new examples of Fano manifolds of the lowest dimension on which Kähler-Ricci flow develops type II singularity.
\end{abstract}

\maketitle

\section{Introduction}

Ricci flow, which was introduced by R. Hamilton in \cite{Hamilton}, has many developments in geometry over the past 40 years. It preserves the K\"ahlerian condition. When restricted to K\"ahler manifolds, it is referred to  as Kähler-Ricci flow. Let $X$ be a Fano manifold, that is, a compact Kähler manifold with positive first Chern class, we consider the following normalized Kähler-Ricci flow:
\begin{equation}\label{eq: KRF}
\begin{cases}
    \frac{\partial \omega (t)}{\partial t} &= -\text{Ric}\left(\omega(t)\right)+\omega(t) \\
    \omega(0) & = \omega_0
\end{cases}
\end{equation}
where $\omega_0$ and $\omega(t)$ denote the Kähler forms of initial Kähler metric $g_0$ and the solution $g(t)$ of Ricci flow, respectively.

It was shown in \cite{Cao} that when $\omega_0$ represents $2\pi c_1(X)$, equation (\ref{eq: KRF}) has a global solution $\omega(t)$ for all $t \geq 0$. Then it is natural to ask what is the limiting behavior of $\omega(t)$ as $t \rightarrow \infty$. Let us first recall the definition of Kähler-Ricci soliton, which is a generalization of Kähler-Einstein metrics. A Kähler metric $\omega$ is a (shrinking) Kähler-Ricci soliton if for some holomorphic vector field $\xi$ on Fano manifold $X$, we have:
$$
\text{Ric}(\omega) - \omega = L_{\xi} \omega
$$
where $L_{\xi} \omega$ is taking Lie derivative along $\xi$. When $\xi =0$, it reduces to the Kähler-Einstein metric.
It was proved in \cite{TianZhu07}, \cite{TianZhu15},\cite{TZZZ} and \cite{DerSze} that if $X$ admits a Kähler-Ricci soliton  $\omega_{\text{KRS}}$ and $\omega_0$ represents $2\pi c_1(X)$, then as $t$ goes to $\infty$, the solution $\omega(t)$ of the normalized Kähler-Ricci flow (\ref{eq: KRF}) converges to $\omega_{\text{KRS}}$ up to the action of the automorphism group of $X$.  It remains to see what we can expect if $X$ does not admit any Kähler-Ricci soliton. In this situation, the complex structures will jump as $t \rightarrow \infty$ and it was conjectured in \cite{Tian97}, also referred to as Hamilton-Tian conjecture, that:

\begin{conjecture}\label{thm:Hamilton-Tian}
For any global solution $\omega(t)$ of (\ref{eq: KRF}) as above, any sequence $(X,\omega(t))$ along Kähler-Ricci flow contains a subsequence converging to a $\mathbb{Q}$-Fano variety $(X_{\infty},\omega_{\infty})$ in the Gromov-Hausdorff topology, and $(X_{\infty},\omega_{\infty})$ admits a smooth shrinking Kähler-Ricci soliton outside the singular set $S$ of $X_\infty$ which is closed and of Hausdorff codimension at least 4. Moreover, this subsequence of $\left(X,\omega(t)\right)$ converges locally to $(X_{\infty},\omega_{\infty})$ in the Cheeger-Gromov topology.
\end{conjecture}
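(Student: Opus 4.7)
The plan is to combine Perelman's a priori estimates along (\ref{eq: KRF}) with Cheeger-Colding-Tian theory for non-collapsed Ricci limit spaces and with the partial $C^0$-estimate, so as to upgrade the metric limit to an algebraic $\mathbb{Q}$-Fano variety carrying a shrinking soliton on its smooth part.

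First, I would establish Perelman's uniform estimates along the normalized flow: a scalar curvature bound $|R(\omega(t))| \leq C$, a diameter bound $\operatorname{diam}(X,\omega(t)) \leq D$, and a non-collapsing estimate $\operatorname{Vol}(B_r(x,t)) \geq \kappa r^{2n}$, all consequences of the monotonicity of Perelman's $\mathcal{W}$-functional when $c_1(X) > 0$. These estimates together with the flow equation give an effective lower bound on Ricci curvature, so for any sequence $t_i \to \infty$ the family $(X,\omega(t_i))$ is pre-compact in the pointed Gromov-Hausdorff topology. By Cheeger-Colding-Tian, any subsequential limit stratifies as $X_\infty = \mathcal{R} \sqcup \mathcal{S}$ with $\mathcal{R}$ a smooth Kähler manifold and $\mathcal{S}$ closed of Hausdorff codimension at least $4$; Shi-type local derivative estimates on $\mathcal{R}$ upgrade this to smooth Cheeger-Gromov convergence on the regular part, and passing to the limit in Perelman's gradient-shrinker identity produces the soliton vector field $\xi$ with $\text{Ric}(\omega_\infty) - \omega_\infty = L_\xi \omega_\infty$ on $\mathcal{R}$.

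The remaining, and hardest, step is to show $X_\infty$ is a $\mathbb{Q}$-Fano variety rather than just a metric space. Here the tool is a partial $C^0$-estimate along the flow: for $\ell$ sufficiently large and divisible, one proves that the Bergman kernel $\rho_{\omega(t),\ell}$ of $H^0(X,-\ell K_X)$ with respect to $\omega(t)$ is uniformly bounded below, giving Kodaira embeddings $\Phi_t : X \hookrightarrow \mathbb{P}^{N_\ell}$ whose images, after projective normalisation, converge algebraically to a normal $\mathbb{Q}$-Fano variety that is homeomorphic to the Gromov-Hausdorff limit $X_\infty$. The main obstacle is exactly this estimate: it requires Hörmander $L^2$-estimates for $\bar\partial$ with constants uniform in $t$, combined with a sharp cut-off and tangent-cone analysis on the almost-regular set. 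The classical partial $C^0$-estimate is static, and adapting its cut-off and gluing arguments to a time-dependent, only conditionally non-collapsed background is where the real work of the proof lies; everything else is relatively mechanical once this estimate is in place.
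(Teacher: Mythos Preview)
The paper does not contain its own proof of this statement: it is presented as the Hamilton--Tian \emph{conjecture} (Conjecture~\ref{thm:Hamilton-Tian}), and the paper merely records that it was resolved elsewhere, citing \cite{TianZhang} in dimension $\le 3$ and \cite{Bamler}, \cite{ChenWang}, \cite{wangzhu}, \cite{BLXZ} in general. So there is no in-paper argument to compare your proposal against; the conjecture is used here only as background for the main results on Family No.~2.23.

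That said, your sketch has a genuine gap worth flagging. You assert that Perelman's scalar-curvature, diameter, and non-collapsing bounds ``together with the flow equation give an effective lower bound on Ricci curvature,'' and then invoke Cheeger--Colding--Tian regularity. This step is false as stated: Perelman's estimates do \emph{not} yield a uniform Ricci lower bound along the flow, and the absence of such a bound is exactly why the conjecture was hard. Without it, the classical Cheeger--Colding--Tian stratification and codimension-$4$ estimate for the singular set are not directly available. The cited works overcome this by developing substantially new machinery: in dimension $3$, \cite{TianZhang} exploits an $L^4$ bound on Ricci curvature specific to that dimension; in higher dimensions, \cite{ChenWang} and \cite{Bamler} build new weak-compactness and regularity theories for Ricci flows (e.g.\ Bamler's $\mathbb{F}$-convergence and structure theory) that replace the Ricci-lower-bound hypothesis. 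Your outline correctly identifies the partial $C^0$-estimate as the bridge to the $\mathbb{Q}$-Fano structure, but as the paper itself notes, that estimate is established \emph{assuming} the compactness just described, so the real missing ingredient in your plan is the compactness step, not the partial $C^0$-estimate.
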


This conjecture has been first solved for dimension less than or equal to 3 (\cite{TianZhang}) and subsequently for higher dimensions (\cite{Bamler},\cite{ChenWang}, \cite{wangzhu}, \cite{BLXZ}) by using different methods. The key of these methods is to establish certain compactness for K\"ahler-Ricci flow which leads to the partial $C^0$-estimate along the flow. We would also like to mention that a generalized version of the Hamilton-Tian conjecture was proved in \cite{Bamler} and an algebraic proof of the conjecture was given in \cite{BLXZ} by using the work of \cite{HanLia}.

In \cite{TianZhang}, assuming the compactness mentioned above, the second author and Zhang established the partial $C^0$-estimate for the Kähler-Ricci flow and proved that the Gromov-Hausdorff limit of Kähler-Ricci flow is a normal projective variety whose singular set coincides with $S$. One can show that the Gromov-Hausdorff limit coincides with the algebro-geometric limit in this situation, that is, if $X_i$ converges to $X_{\infty}$ in the Gromov-Hausdorff topology, then $X_i$ and $X_{\infty}$ can be realized as fibers in a flat family. In \cite{ChenSunWang}, they describe a "two-step degeneration" picture of the above process and establish the uniqueness of Gromov-Hausdorff limit $X_{\infty}$. Han and Li further confirmed in  \cite{HanLib} that  the Gromov-Hausdorff limit $X_{\infty}$ depends only on the algebraic structure of $X$ and is independent of the choice of the initial metric of the flow.

We now turn to a brief discussion of the singularity formation of Kähler-Ricci flow. We recall that a solution $\omega(t)$ of Kähler-Ricci flow is of type I if the curvature of $\omega(t)$ is uniformly bounded, otherwise, we call $\omega(t)$ a solution of type II. There was a folklore conjecture closely related to Conjecture \ref{thm:Hamilton-Tian} stating that the Gromov-Hausdorff limit $X_\infty$ is always smooth, i.e., the singular set $S$ is always empty. This is equivalent to saying that Kähler-Ricci flow has no type $\rm II$ solution.  The folklore conjecture was first disproved in \cite{LiTianZhu} by considering Fano compactifications of semisimple Lie groups \footnote{More counterexamples on group compactifications were later found in \cite{Korea}.}. However, the lowest dimension among these group compactifications is 6. Naturally, one is led to wonder whether there exist examples of Fano manifold of lower dimensions on which Kähler-Ricci flow develops singularities of type II. Since the folklore conjecture holds for complex dimension two or less, a natural question is whether the lowest possibility is $3$.

In this short note, we answer this question by giving a family of examples of Fano threefolds that have type $\rm II$ solutions for the Kähler-Ricci flow. Here is our main theorem:

\begin{theorem}\label{thm: mainthm}
Any Fano threefold $X$ from family No. 2.23 in  Mori-Mukai's list has type $\rm II$ solutions of the normalized Kähler-Ricci flow. Namely, the Gromov-Hausdorff limit of $X$ along the Kähler-Ricci flow is a singular $\mathbb{Q}$-Fano variety.
\end{theorem}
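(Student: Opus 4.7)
My plan is to reduce Theorem~\ref{thm: mainthm} to two algebro-geometric statements by invoking the resolved Hamilton--Tian conjecture together with the algebraic characterization of the Gromov--Hausdorff limit due to \cite{ChenSunWang} and \cite{HanLib}. Write $X_\infty$ for the (unique) algebraic limit of $X$ under the Kähler--Ricci flow. The theorem then reduces to establishing (A) no Fano threefold $X$ in family No.~2.23 admits a Kähler--Ricci soliton, and (B) $X_\infty$ is singular. Statement (B) already implies type II, since in the type I regime Perelman-style compactness forces smooth convergence of the flow to a Kähler--Ricci soliton on a smooth limit; statement (A) first rules out $X_\infty \cong X$ and, in the process, produces the destabilizing degeneration which will be reused in step (B).

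For step (A), I would use the $\mathbf{H}$-invariant / weighted $\delta$-invariant machinery. One first locates a maximal torus $\mathbb{T} \subset \mathrm{Aut}(X)^\circ$, uses Tian--Zhu's modified Futaki invariant to single out the candidate soliton vector field $\xi \in \mathrm{Lie}(\mathbb{T})$, and then exhibits a $\mathbb{T}$-equivariant special test configuration with non-positive $\xi$-weighted generalized Futaki invariant, obstructing the existence of any Kähler--Ricci soliton. For family No.~2.23 the natural source of such a degeneration is the explicit projective-geometric description of its members: degenerating a blow-up center (or similar distinguished subvariety) to a more singular configuration gives a canonical equivariant one-parameter degeneration whose weighted Futaki invariant can be computed directly.

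For step (B), I apply the two-step degeneration picture: given that $X$ has no soliton, the optimal degeneration first produces a weighted K-semistable $\mathbb{Q}$-Fano $W$ (the $\mathbf{H}$-minimizer), which in turn degenerates to the weighted K-polystable $X_\infty$ carrying a singular Kähler--Ricci soliton. I would identify $W$ as the central fiber of the test configuration from step (A), verify directly that $W$ is singular, and then show that no smooth Fano threefold can realize $X_\infty$ by comparing invariants such as the Picard number, anticanonical degree, and connected automorphism group against the Mori--Mukai list. The main obstacle is precisely this last verification: one must produce a concrete test configuration which both realizes the infimum of the $\mathbf{H}$-functional and has a demonstrably singular central fiber. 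Confirming that a candidate destabilizer is in fact the $\mathbf{H}$-minimizer is generally delicate, and one must also guarantee that the further degeneration $W \rightsquigarrow X_\infty$ does not accidentally smooth $W$ to a member of the Mori--Mukai classification.
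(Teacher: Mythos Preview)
Your decomposition into (A) and (B) matches the paper's, but you overlook the structural fact that makes both steps nearly trivial: $\mathrm{Aut}(X)$ is \emph{finite} for every member of family No.~2.23 (Lemma~\ref{lem:auto}, proved by an explicit linear-algebra computation on $\mathrm{Aut}(Q,C)$). Once you know this, your step (A) collapses: the maximal torus $\mathbb{T}\subset\mathrm{Aut}(X)^\circ$ is trivial, the only candidate soliton field is $\xi=0$, and the $\mathbf{H}$-invariant / weighted-$\delta$ apparatus reduces to ordinary K-stability. The paper then just observes that $X$ is divisorially K-unstable (Lemma~\ref{lem: Kunstable}, via an explicit $\beta$-computation for the strict transform $\tilde H$), so no K\"ahler--Einstein metric and hence no K\"ahler--Ricci soliton. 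No equivariant test configuration or modified Futaki invariant is needed.

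For step (B) the paper likewise avoids the two-step $W\rightsquigarrow X_\infty$ analysis. Rather than identifying the $\mathbf{H}$-minimizer, it argues directly: if $X_\infty$ were smooth, the flat family $X\rightsquigarrow X_\infty$ is a smooth morphism, so Ehresmann gives $b_2(X_\infty)=2$, $b_3(X_\infty)=2$, $(-K_{X_\infty})^3=30$, and the Mori--Mukai classification forces $X_\infty$ itself to lie in family No.~2.23. Then (A) applied to $X_\infty$ forbids a K\"ahler--Ricci soliton there, contradicting the Hamilton--Tian conjecture. This sidesteps exactly the obstacles you flag---no need to certify that a candidate destabilizer realizes the $\mathbf{H}$-infimum, and no risk that $W\rightsquigarrow X_\infty$ accidentally smooths, because $W$ never enters. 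Your final clause (comparing invariants against the Mori--Mukai list) is in fact the entirety of the paper's step (B); the intermediate identification of $W$ is superfluous.
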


Theorem \ref{thm: mainthm} will be deduced from the following theorem:
\begin{theorem}\label{thm: lemmathm}
Any Fano threefold $X$ from the family No. 2.23 in the Mukai-Mori's list does not admit Kähler-Ricci soliton.
\end{theorem}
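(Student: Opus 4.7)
The plan is to reduce the non-existence of Kähler-Ricci solitons on members of family 2.23 to a purely algebraic K-stability problem, and then to exhibit an explicit destabilizing test configuration. The starting point is the equivalence, established in various forms by Berman--Witt-Nystr\"om, Dervan--Sz\'ekelyhidi, Han--Li, and Blum--Liu--Xu--Zhuang, between the existence of a Kähler-Ricci soliton on a smooth Fano manifold $X$ and weighted (modified) K-polystability of the pair $(X,\xi)$ for some holomorphic vector field $\xi$ in the Lie algebra $\mathfrak{t}$ of a maximal torus $T\subset\mathrm{Aut}(X)$. To refute existence of a KRS it therefore suffices, for every candidate $\xi$ (including $\xi=0$ when $\mathrm{Aut}^0(X)$ is discrete), to exhibit a non-trivial $T$-equivariant normal test configuration whose modified Futaki invariant is non-positive.

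I would then invoke the Mori--Mukai description of family 2.23: each member arises as a blow-up $\pi\colon X \to Y$ of a simpler Fano threefold $Y$ along a smooth curve $C\subset Y$. From this one reads off the identity component of the automorphism group and fixes a maximal torus; for the generic member $\mathrm{Aut}^0(X)$ is trivial, in which case a Kähler-Ricci soliton collapses to a Kähler-Einstein metric and the criterion reduces to ordinary K-polystability, whereas for special members with positive-dimensional $T$ the test configuration must be chosen $T$-equivariantly.

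The destabilization I would construct is induced by a degeneration of the center $C$ inside $Y$ (for instance through a pencil containing a reducible or singular member), or more robustly by deformation to the normal cone of a distinguished subvariety---for example an exceptional divisor of $\pi$, or the strict transform of a natural divisor on $Y$---adapted to the available symmetry. The result is a flat family $\mathcal{X}\to\mathbb{A}^1$ with a $\mathbb{C}^*$-action and a singular $\mathbb{Q}$-Fano central fiber $X_0$. Using the integral formula of Dervan--Sz\'ekelyhidi one then evaluates the weighted Futaki invariant
\begin{equation*}
\mathrm{Fut}_\xi(\mathcal{X},\mathcal{L}) \;=\; \int_{\Delta} DF(y)\, e^{\langle \xi, y\rangle}\, dy
\end{equation*}
in terms of intersection numbers on the total space, aiming to show that it is strictly negative for every $\xi\in\mathfrak{t}$, thereby violating weighted K-polystability.

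Finally, to transport the conclusion across the whole family I would argue by constancy: the deformation type of the test configuration and the resulting Futaki-type invariant are preserved in flat $\mathbb{Q}$-Gorenstein families, so the destabilization of one representative rules out a KRS on every member of the irreducible family 2.23. The main obstacle I expect lies in the precise choice of the destabilization together with the intersection-theoretic computation of the weighted Futaki invariant, especially for those special members with larger $\mathrm{Aut}^0$, where the test configuration must destabilize $(X,\xi)$ simultaneously for every candidate soliton vector field. A systematic use of the projection formula for $\pi\colon X\to Y$ together with the known anticanonical intersection data on the base $Y$ should render this computation tractable.
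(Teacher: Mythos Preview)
Your plan is workable in principle but is built around a misconception that makes it far more elaborate than necessary. You hedge about ``special members with positive-dimensional $T$'' and set up the full weighted/modified K-stability machinery so as to destabilize $(X,\xi)$ for every candidate soliton vector field. In fact, \emph{every} member of family 2.23---not just the generic one---has finite automorphism group. The paper establishes this directly (Lemma~\ref{lem:auto}) by analyzing $\mathrm{Aut}(Q,C)$ through the exact sequence
\[
0 \longrightarrow \Ker_H \longrightarrow \mathrm{Aut}(Q,C) \longrightarrow \mathrm{Aut}(H,C),
\]
checking case-by-case that $\Ker_H$ is finite (both when $H$ is smooth and when $H$ is a cone) and that $\mathrm{Aut}(H,C)$ is finite via simultaneous diagonalization of the pencil of quadrics cutting out $C$. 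Once $\mathrm{Aut}^0(X)=\{e\}$, the only candidate is $\xi=0$, a K\"ahler--Ricci soliton is forced to be K\"ahler--Einstein, and the weighted Futaki integral you wrote down collapses to the ordinary Donaldson--Futaki invariant. All the apparatus you introduce to handle nonzero $\xi$ is then superfluous.

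For the destabilization itself, your instinct to use ``the strict transform of a natural divisor on $Y$'' is exactly what the paper does: taking $\tilde H$ the strict transform of the hyperplane section $H\subset Q$ containing $C$, one writes $-K_X-t\tilde H=(3-t)\pi^*A+(t-1)E$, computes the volume function, and obtains $\beta_X(\tilde H)=-\tfrac{1}{12}<0$. This is divisorial instability in Fujita's sense, hence K-instability by the Fujita--Li valuative criterion. There is no need to degenerate the center $C$ inside $Q$, and no need for your ``constancy in flat families'' argument either: the $\beta$-computation depends only on the numerical data $A^3$, $A\cdot C$, $\deg N_{C/Q}$, which are the same for every member of the family.
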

This theorem will be proved by showing that the automorphism group of $X$ is finite (see Lemma \ref{lem:auto}) and $X$ is K-unstable (see Lemma \ref{lem: Kunstable}). In \cite{Delcroix}, Delcroix constructed examples of K-unstable Fano manifolds by blowing up quadrics along lower dimensional linear subquadrics and showed that they do not admit any Kähler-Ricci soliton. The lowest dimension of these examples of Delcroix is $5$, so he further asked for examples of Fano threefolds or fourfolds which are K-unstable and do not admit Kähler-Ricci soliton. The Theorem \ref{thm: lemmathm} gives an affirmative answer to Question 1.3 in \cite{Delcroix}.

\begin{question}(\cite{Delcroix}, Question 1.3) Does there exist examples of Fano threefolds with no
Kähler–Ricci solitons that are K-unstable? What about Fano fourfolds?

\end{question}

We leave the following question for future study:
\begin{question}
Suppose $X$ is a Fano threefold from the family No 2.23, can we find the Gromov-Hausdorff limit $X_{\infty}$ explicitly? Are members from the family 2.23 the only examples of Fano threefolds on which Kähler-Ricci flow has solutions of type $\rm II$?
\end{question}


\section{Family No 2.23 of Fano threefolds}

Due to the work of Iskovskikh-Mori-Mukai (see \cite{MukaiMori},\cite{Fano5}), we know every smooth Fano threefold belongs to one of the 105 deformation families.
Here is the description of the family No. 2.23 in the Mori-Mukai's list of smooth Fano threefold (see \cite{MukaiMori}): Any $X$ in this family  is obtained by blowing up a smooth quadric $Q \subset \mathbb{P}^4$ along a smooth irreducible curve $C$ which is an intersection of $H \in |\mathcal{O}_{Q}(1)|$ and $Q' \in |\mathcal{O}_{Q}(2)|$, note that $C$ is an elliptic curve of degree 4. There are two subfamilies:
\begin{itemize}
    \item Subfamily (a): the hypersurface $H \in |\mathcal{O}_{Q}(1)|$ is smooth.
    \item Subfamily (b): the hypersurface $H \in |\mathcal{O}_{Q}(1)|$ is singular, but its singular set has an empty intersection with $Q\cap Q'$.
\end{itemize}
\subsection{Automorphism groups of threefolds from family No. 2.23 are finite}
The following result was already known in \cite{CPS}, and we include a proof for the reader's convenience. This proof is more direct than the original one.

\begin{lemma}\label{lem:auto}
For each $X$ from family No. 2.23 , the automorphism group of $X$ is a finite group.
\end{lemma}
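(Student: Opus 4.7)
The plan is to prove finiteness of $\operatorname{Aut}(X)$ by showing that its identity component $\operatorname{Aut}^0(X)$ is trivial. Since $X$ is a smooth Fano threefold, $\operatorname{Aut}(X)$ is an algebraic group of finite type (it embeds in $\operatorname{PGL}(H^0(-mK_X))$ for $m \gg 0$), so triviality of $\operatorname{Aut}^0(X)$ yields the conclusion.

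My first step is to descend the problem from $X$ to $Q$ along the blowdown $\pi\colon X \to Q$. Because $\rho(X) = 2$, the Mori cone $\overline{NE}(X)$ has exactly two extremal rays, one of which is the ray $R_E$ of fibers of $\pi$. Since $\operatorname{Aut}^0(X)$ is connected it acts trivially on the Néron--Severi group, hence preserves each extremal ray, and in particular preserves the divisorial contraction $\pi$ associated to $R_E$ and its exceptional divisor $E$. Combined with the universal property of the blowup, this yields an isomorphism $\operatorname{Aut}^0(X) \cong \operatorname{Aut}^0(Q,C)$, where the right-hand side is the identity component of the stabilizer of $C$ in $\operatorname{Aut}(Q)$. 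Setting $G := \operatorname{Aut}^0(Q,C)$, the problem reduces to showing $G = \{e\}$.

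The key idea is the dichotomy between affine and abelian algebraic groups. As a closed subgroup of $\operatorname{Aut}(Q) \subset \operatorname{PGL}(5)$, the group $G$ is connected and affine; but the action on $C$ induces a morphism $G \to \operatorname{Aut}^0(C) = C$ into a complete abelian variety, whose image must be simultaneously affine and complete, hence trivial. Therefore $G$ fixes $C$ pointwise. A smooth elliptic quartic in $\mathbb{P}^4$ cannot lie in any plane (smooth plane quartics have genus $3$), so $C$ spans the hyperplane $\Lambda \cong \mathbb{P}^3$ that cuts out $H$ on $Q$; consequently $G$ fixes $\Lambda$ pointwise. Note that this step is uniform in subfamilies (a) and (b).

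It remains to show, at the Lie algebra level, that no nonzero linear vector field on $\mathbb{P}^4$ vanishing along $\Lambda = \{\ell = 0\}$ is tangent to $Q = \{q = 0\}$. Modulo the Euler field, any such vector field is uniquely of the form $X = \ell(x)\, w \cdot \partial_x$ for some $w \in \mathbb{C}^5$, and a direct computation gives $X(q) = 2\,\ell(x)\, B(x,w)$, where $B$ denotes the polar form of $q$. Tangency to $Q$ forces $\ell(x)\,B(x,w) = \mu\, q(x)$ for a constant $\mu$: if $\mu \neq 0$ then $q$ factors as a product of two linear forms, contradicting smoothness of $Q$; if $\mu = 0$ then $B(\cdot,w) \equiv 0$, and nondegeneracy of $B$ (again from smoothness of $Q$) gives $w = 0$. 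Hence $G$ has trivial Lie algebra and, being connected, must be trivial. The only delicate point in this plan is the descent step, which rests on the Mori-theoretic fact that $\operatorname{Aut}^0(X)$ preserves $E$; once that is in hand, the rest follows cleanly from the affine-versus-abelian dichotomy and irreducibility of the smooth quadric $q$.
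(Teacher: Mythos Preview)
Your proof is correct and takes a genuinely different route from the paper's. The paper identifies $\operatorname{Aut}(X)=\operatorname{Aut}(Q,C)$, then sets up the exact sequence
\[
0 \to \operatorname{Ker}_H \to \operatorname{Aut}(Q,C) \xrightarrow{r} \operatorname{Aut}(H,C),
\]
first arguing geometrically that $\operatorname{Aut}(Q,C)$ preserves $H$ so that $r$ is well-defined, and then showing both $\operatorname{Ker}_H$ and $\operatorname{Aut}(H,C)$ are finite by explicit matrix computations in coordinates (with a case split according to whether $H$ is smooth or singular, and a simultaneous diagonalisation of the pencil of quadrics in $\mathbb{P}^3$ to handle $\operatorname{Aut}(H,C)$). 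By contrast, you work at the level of the identity component and replace all of this by two conceptual observations: the affine-versus-abelian dichotomy forces $G=\operatorname{Aut}^0(Q,C)$ to fix the elliptic curve $C$ pointwise, and since $C$ spans the hyperplane $\Lambda$ this reduces the question to a short Lie-algebra computation showing no nonzero linear field vanishing on $\Lambda$ is tangent to the smooth quadric. Your argument is cleaner and uniform in the two subfamilies, at the cost of invoking a little structure theory of algebraic groups; the paper's argument is more elementary and hands-on, and in fact pins down the finite groups $\operatorname{Ker}_H$ explicitly, but requires separate coordinate computations for each case.
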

\begin{proof}
To begin with, we note that $\text{Aut}(X) = \text{Aut}(Q,C)$. Following the proof in  \cite{CPS}, we consider the following exact sequence:
$$
\begin{tikzcd}
0 \arrow[r] & \text{Ker}_H \arrow[r, "i"] & {\text{Aut}(Q,C)} \arrow[r, "r"] & {\text{Aut}(H,C)},
\end{tikzcd}
$$
where
\begin{align*}
    \text{Aut}(Q,C) &= \{\phi \in \text{Aut}(Q)| \ \phi(C)=C \},\\
    \text{Aut}(H,C) &= \{\phi \in \text{Aut}(H)| \ \phi(C)=C \},\\
    \text{Ker}_H&=\{\phi \in \text{Aut}(Q,C)| \ \forall x \in H, \phi(x) = x\},
\end{align*}
    $r: \text{Aut}(Q,C) \rightarrow \text{Aut}(H,C), \ \phi \mapsto \phi|_H$ is the restriction map and $i$ is the natural inclusion.

   We first show $r$ is well-defined. Observe that: (1) $\text{Aut}(Q,C) \subset \text{Aut}(\mathbb{P}^4)=\text{PSL}(5,\mathbb{C})$
   which is a linear algebraic group; (2) There are a hyperplane $\bar H$ and a quadratic $\bar Q'$ of $\mathbb{P}^4$ such that $H = \bar H\cap Q$ and $Q'=\bar Q'\cap Q$.\footnote{For H, such a lifting $\bar H$ is unique.}

   It follows from (1) that any $\phi\in \text{Aut}(Q,C)$ maps hyperplane $\bar H$ to another hyperplane $\bar H':=\phi(\bar{H})$. We want to show $\bar{H}'= \bar{H}$. If not, we assume $\bar H'\not= \bar H$, then $S=\bar H\cap \bar H'$ is 2-dimensional subspace. Since a smooth quadric of dimension 3 contains no linear spaces of dimension strictly greater than 1 (see, for example,   section 6.1 of \cite{griffithsharris}), we conclude that $C'=S\cap \bar Q'$ is a curve of degree 2, in particular, it is a rational curve and different from $C$.
   Note $C = Q \cap \bar Q' \cap \bar H \cap \bar H'\subset C'$ which implies $C=C'$, so we get a contradiction, consequently,
    we must have $\bar H = \bar H'$, it follows that $\phi|_H$ is well-defined.

    Next, we claim both $\text{Ker}_H$ and $\text{Aut}(H,C)$ are finite groups. Then by the exactness, we conclude $\text{Aut}(Q,C)$ is a finite group.

 To show $\text{Ker}_H$ is a finite group, after changing the coordinate, we have two cases to check:
 \begin{itemize}
     \item Case 1: If $H$ is smooth, we can choose a coordinate such that $H = V(x_0)$ and $Q = V(x_0^2+x_1^2+\cdots + x_4^2)$;
     \item Case 2: If $H$ is singular, we can choose a coordinate such that $H = V(x_0)$ and $Q = V(x_0x_1+x_2^2+\cdots+x_4^2)$.
 \end{itemize}

 Suppose $\phi \in \Ker_H\subset \text{PGL}(5,\mathbb{C})$, then by definition, for $\  x=[0:x_1:x_2:x_3:x_4] \in H, $ we get $\phi(x) = x$, hence $\phi$ must be of the form:
    $$
    B = \begin{pmatrix}
    a & b & c & d & e\\
    f & \lambda & 0 & 0 & 0 \\
    g & 0 & \lambda & 0 & 0\\
    h & 0 & 0 & \lambda & 0\\
    i& 0& 0 & 0 & \lambda\\
    \end{pmatrix} \in \text{PGL}(5,\mathbb{C})
    $$

  For Case 1, since $\phi \in \text{Aut}(Q)$, there exists $\mu \in \mathbb{C}^*$ such that $B \cdot B^T = \mu\cdot I_{5}$. Hence, we conclude:
    $$
    \begin{cases}
    a^2+b^2+c^2+d^2+e^2 = \mu\\
    af+b\lambda=ag+c\lambda=ah+d\lambda=ai+e\lambda=0 \\
    f^2+\lambda^2=g^2+\lambda^2=h^2+\lambda^2=i^2+\lambda^2=\mu \\
    fg=fh=fi=gh=gi=hi=0 \\

    \end{cases}
    $$

It is easy to show that $b=c=d=e=f=g=h=i=0, \ a^2=\lambda^2=\mu$. Therefore, we deduce $\text{Ker}_H \cong \mathbb{Z}/2\mathbb{Z}$ in Case 1.

 For Case 2, since $\phi \in \text{Aut}(Q)$, we have $B\cdot \begin{pmatrix}
 0 & \frac{1}{2} & 0 & 0 & 0\\
 \frac{1}{2} & 0 & 0 &0 & 0\\
 0 & 0 & 1&0&0\\
 0&0&0&1&0\\
 0&0&0&0&1\\
 \end{pmatrix}\cdot B^T = \mu \cdot \begin{pmatrix}
 0 & \frac{1}{2} & 0 & 0 & 0\\
 \frac{1}{2} & 0 & 0 &0 & 0\\
 0 & 0 & 1&0&0\\
 0&0&0&1&0\\
 0&0&0&0&1\\
 \end{pmatrix}$ for some $\mu \in \mathbb{C}^*$, we arrive at: $$
 \begin{cases}
 ab + c^2 + d^2 +e^2 = \lambda\cdot f = 0\\
 bf+a\lambda = \mu \\
 \frac{1}{2}bg+c\lambda = \frac{1}{2}bh + d\lambda = \frac{1}{2}bi+e\lambda =0\\
 \lambda g = \lambda h =\lambda i =0
 \end{cases}
 $$
 Hence, $b=c=d=e=f=g=h=i=0$ and $a = \lambda, \lambda^2 =\mu$, so $\Ker_H=\text{id}$ in Case 2.

 To show $\text{Aut}(H,C)$ is a finite group:  after intersecting a hyperplane in $\mathbb{P}^4$, we can reduce to the case for quadric surfaces $Q_1,Q_2 \subset \mathbb{P}^3$  and let $C =  Q_1 \cap Q_2$, and it suffices to show $\text{Aut}(Q_1, C)$ is a finite group. Since $C$ is smooth, then by the result of \cite{Miles} (see page 36), there exists a coordinate such that $Q_1$ and $Q_2$ can be simultaneously diagonalized:

    \begin{align*}
     Q_1 &= V(x_0^2+x_1^2+x_2^2+x_3^2) \\
     Q_2 &=
     V(x_0^2+\lambda_1 x_1^2+\lambda_2 x_2^2 + \lambda_3 x_3^2)
    \end{align*}
   where $1,\lambda_1,\lambda_2,\lambda_3$ are pairwise disjoint. Just denote $F_1 = x_0^2+x_1^2+x_2^2+x_3^2$ and $F_2 = x_0^2 + \lambda_1 x_1^2 + \lambda_2 x_2^2 + \lambda_3 x_3^2$, and we can rewrite $Q_1 = V(F_1), Q_2 = V(F_2)$. Take any automorphism $\phi \in \text{Aut}(Q_1,C)\subset \text{Aut}(\mathbb{P}^3)$. We have $\phi \cdot Q_1 = Q_1$ and $\phi \cdot Q_2 = V(F)$ where $F$ is a homogeneous polynomial of degree two. Note that $ C = V(F_1,F_2) $. From $C \subseteq\phi\cdot Q_2$, we have  $(F)\subseteq \sqrt{(F_1,F_2)}=(F_1,F_2)$ since $C$ is reduced. Then there exists $c,d \in \mathbb{C}$ such that $F = c \cdot F_1 + d\cdot F_2$ 
  because the degree of $F$ is two. We denote the matrix corresponding to automorphism $\phi$ by $P \in \text{PGL}(4,\mathbb{C})$. And let $B = \begin{pmatrix}
   1 & 0 & 0 & 0\\
   0 & \lambda_1 & 0 & 0\\
   0 & 0 & \lambda_2 & 0\\
   0 &0 &0& \lambda_3\\
   \end{pmatrix}$,
   then we have matrix equations:
   $$
   \begin{cases}
   P P^T & = I_4\\
   PBP^T & = c I_4 + dB\\
   \end{cases}
   $$
   Note $\text{Tr}(PBP^T)=\text{Tr}(B) = 4c + d \cdot \text{Tr}(B)$, then $c = \frac{1-d}{4}\text{Tr}(B)$.

   Let $\tilde{F}_2=F_2-\frac{1}{4}\text{Tr}(B)F_1$, and $\tilde{Q}_2 = V(\tilde{F}_2)$, then $\phi\cdot Q_1 = Q_1, \ \phi \cdot \tilde{Q}_2 = \tilde{Q}_2$. And $C = Q_1 \cap Q_2 = Q_1 \cap \tilde{Q}_2$. Namely, we finally reduce to find solutions for $P \in \text{GL}(4,\mathbb{C})$ satisfying:
   $$
   \begin{cases}
   P P^T & = I_4\\
   PBP^T & = B\\
   \end{cases}
   $$
   Since $1,\lambda_1,\lambda_2,\lambda_3$ are pairwise disjoint, we conclude that $P$ must be a diagonal matrix with diagonal entry $\pm 1$, which shows
    $\text{Aut}(H,C)$ is a finite group.

\end{proof}

\subsection{Threefolds from family No.2.23 are K-unstable}
In this subsection, we will show all members from family No. 2.23 is K-unstable by computing $\beta$ invariant (For a complete treatment of $\beta$ invariant, we refer our reader to section 3 of \cite{XuSurvey} or chapter 1 of \cite{FanoProject}). This result was already known by Fujita (\cite{Fujita}, Lemma 9.9, see also \cite{FanoProject} Lemma 3.7.4). For the reader's convenience, we also include a proof here.
\begin{lemma}\label{lem: Kunstable}
For every Fano threefold $X$ from family No. 2.23, $X$ is divisorially unstable, namely, there exists a divisor $E$ on $X$ such that $\beta_X(E) < 0$. In particular, this implies $X$ is K-unstable, so $X$ does not admit Kähler-Einstein metric.
\end{lemma}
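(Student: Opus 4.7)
The plan is to test the divisor $E = \tilde H$, the strict transform on $\pi \colon X \to Q$ of the hyperplane section $H$ cutting out $C$. Since $C \subset H$ with multiplicity one, $\tilde H \sim \pi^*H - E_0$, where $E_0$ denotes the exceptional divisor of $\pi$. As $\tilde H$ is a prime Cartier divisor on the smooth threefold $X$, its log discrepancy is $A_X(\tilde H) = 1$, so it suffices to show $S_X(\tilde H) > 1$.

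First I would record the intersection theory on $X$. Since $C$ is an elliptic curve of degree $4$, adjunction gives $\deg N_{C/Q} = 2g-2 + (-K_Q)\cdot C = 0 + 3\cdot 4 = 12$, and the standard formulas for a blowup along a smooth curve in a threefold yield
\begin{equation*}
(\pi^*H)^3 = 2,\quad (\pi^*H)^2 \cdot E_0 = 0,\quad \pi^*H \cdot E_0^2 = -4,\quad E_0^3 = -12,
\end{equation*}
which are consistent with the known value $(-K_X)^3 = 30$ for Family~2.23.

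Next I would compute $\mathrm{vol}(-K_X - t\tilde H)$ by splitting $[0,\tau(\tilde H)]$ into two ranges, using
\begin{equation*}
-K_X - t\tilde H = (3-t)\pi^*H + (t-1)E_0.
\end{equation*}
For $t \in [0,1]$ this is the convex combination $(1-t)(-K_X) + t\cdot (2\pi^*H)$ of two nef classes, hence nef, and its volume equals
\begin{equation*}
(-K_X - t\tilde H)^3 = 30 - 6t - 6t^2 - 2t^3.
\end{equation*}
For $t \in [1,3]$ the class is no longer nef (it has negative intersection with the fibres of $E_0 \to C$), but it may be rewritten as $\pi^*\bigl((3-t)H\bigr) + (t-1)E_0$ with the second summand effective and $\pi$-exceptional. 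Since $\pi$ contracts $E_0$ onto the codimension-two curve $C$, a Hartogs-type argument gives $\pi_*\mathcal{O}_X(kE_0) = \mathcal{O}_Q$ for $k \geq 0$, and therefore
\begin{equation*}
H^0\bigl(X,m(-K_X - t\tilde H)\bigr) = H^0\bigl(Q, m(3-t)H\bigr)
\end{equation*}
for every sufficiently divisible $m$, whence $\mathrm{vol}(-K_X - t\tilde H) = \mathrm{vol}_Q\bigl((3-t)H\bigr) = 2(3-t)^3$. For $t > 3$ the pushforward $(3-t)H$ on $Q$ is not pseudoeffective, so neither is $-K_X - t\tilde H$, giving $\tau(\tilde H) = 3$.

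Putting these together,
\begin{equation*}
S_X(\tilde H) = \frac{1}{30}\left(\int_0^1 (30 - 6t - 6t^2 - 2t^3)\, dt + \int_1^3 2(3-t)^3\, dt\right) = \frac{1}{30}\left(\frac{49}{2} + 8\right) = \frac{13}{12},
\end{equation*}
so $\beta_X(\tilde H) = 1 - \frac{13}{12} = -\frac{1}{12} < 0$. The computation is insensitive to whether $H$ is smooth (Subfamily (a)) or singular (Subfamily (b)), since it only uses the curve $C$ and the structure of the blowup $\pi$. The main obstacle I anticipate is justifying the volume formula on the range $t \in [1,3]$: the pushforward identity above takes the place of an explicit divisorial Zariski decomposition in dimension three, and it is the step most liable to a sign or normalisation slip.
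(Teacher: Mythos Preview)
Your proposal is correct and follows essentially the same route as the paper: the same test divisor $\tilde H$, the same decomposition $-K_X - t\tilde H = (3-t)\pi^*H + (t-1)E_0$, the same split of the volume computation at $t=1$, and the same value $\beta_X(\tilde H) = -\tfrac{1}{12}$. The only cosmetic differences are that the paper checks nefness on $[0,1]$ by intersecting with two test curves rather than by your convex-combination argument, and for $t\in[1,3]$ it phrases the volume computation as a Zariski decomposition with positive part $(3-t)\pi^*H$ rather than via your pushforward identity $\pi_*\mathcal O_X(kE_0)=\mathcal O_Q$; both justifications yield $\mathrm{vol}=2(3-t)^3$.
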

\begin{proof}
Let $A$ be a hyperplane section of $Q$, then by adjunction formula $K_Q \sim -3A$ and $K_X -\pi^* K_Q =E$, where $E$ is the exceptional divisor, so $-K_X = 3\pi^*A -E$. Let $\tilde{H}$ be the strict transform of $H$, then $\pi^* H = \tilde{H}+E \sim \pi^*A$. For $t \geq 0$, consider $-K_X -t\cdot \tilde{H}=(3-t)\pi^*A +(t-1)E$. Note for $0\leq t \leq 3$, $-K_X - t\cdot \tilde{H}$ is a big divisor. To check nefness:
$$
\begin{cases}
(-K_X - t\cdot \tilde{H})\cdot f &= 1-t \geq 0\\
(-K_X - t\cdot \tilde{H})\cdot \pi^* L &= 3-t \geq 0
\end{cases}
$$
where $f\subset E$ is a line in exceptional surface and $L$ is a line in quadric threefold $Q$ which does not intersect with $C$. Hence, $-K_X - t\cdot \tilde{H}$ is nef if and only if $0\leq t \leq 1$. If $1<t \leq 3$, Zariski decomposition of $-K_X -t\cdot \tilde{H}$ is given by $-K_X-t\tilde{H}=(3-t)\cdot \pi^*A +(t-1)\cdot E$ where $(3-t)\cdot \pi^*A $ is the positive part and $(t-1)\cdot E$ is the negative part. By the formula in \cite{Fano5} (see Lemma 2.2.14), we have
\begin{align*}
    (-K_X - t\cdot \tilde{H})^3 & = [(3-t)\cdot \pi^*A + (t-1)\cdot E]^3\\
    &=(3-t)^3\cdot A^3 + 0 + 3(3-t)(t-1)^2\pi^*A\cdot (-\pi^*C+\text{deg}(N_{C/Q})f)-(t-1)^3c_1(N_{C/Q})\\
    &=(3-t)^3\cdot A^3 - 3(3-t)(t-1)^2 A\cdot C - (t-1)^3 (3A\cdot C)\\
    &=-2(t^3+3t^2+3t-15)
\end{align*}
Thus,
$$
\text{vol}(-K_X - t\cdot \tilde{H}) =
\begin{cases}
-2(t^3+3t^2+3t-15) & 0\leq t \leq 1\\
2(3-t)^3 & 1<t \leq 3
\end{cases}
$$
Therefore,
\begin{align*}
\beta_X(\tilde{H})&=A_X(\tilde{H})-S_X(\tilde{H})\\
&=1 - \frac{1}{30}\int_0^1 -2(t^3+3t^2+3t-15)\dif t - \frac{1}{30} \int_1^3 2(3-t)^3 \dif t\\
&=-\frac{1}{12}<0
\end{align*}
Hence $X$ is divisorially unstable, so $X$ is K-unstable by Fujita-Li's valuation criterion (\cite{LiChi},\cite{Fujita}), then $X$ does not admit Kähler-Einstein metric by Yau-Tian-Donaldson theorem.
\end{proof}

\begin{remark}
The threefolds in Family No.2.23 provide more counterexamples to the following conjecture: {\it If a compact Fano manifold has no nontrivial holomorphic vector fields, then it admits a Kähler-Einstein metric}. The first counterexample was constructed by the second author in \cite{Tian97} by deforming the Mukai-Umemura threefold.
\end{remark}

\section{Proof of Theorem \ref{thm: lemmathm} and Theorem \ref{thm: mainthm}}
We will first prove Theorem \ref{thm: lemmathm} which is restated as follows:

\begin{theorem}\label{thm:NoKRS}
For all Fano threefolds $X$ from family No 2.23, $X$ does not admit Kähler-Ricci soliton.
\end{theorem}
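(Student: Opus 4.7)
The plan is to combine the two preceding lemmas to rule out the existence of a Kähler-Ricci soliton by a reductio argument. Suppose towards contradiction that some $X$ in family No.~2.23 carries a Kähler-Ricci soliton $\omega$, so that $\mathrm{Ric}(\omega)-\omega=L_{\xi}\omega$ for some holomorphic vector field $\xi$ on $X$.

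First I would pass from finiteness of $\mathrm{Aut}(X)$ to triviality of the Lie algebra of holomorphic vector fields. By Lemma~\ref{lem:auto}, the complex Lie group $\mathrm{Aut}(X)$ is finite, hence zero-dimensional; its Lie algebra $\mathfrak{aut}(X)=H^0(X,T_X)$ therefore vanishes. Consequently every holomorphic vector field on $X$ is identically zero, so the soliton vector field $\xi$ must be $0$. Thus the putative soliton equation degenerates to $\mathrm{Ric}(\omega)=\omega$, i.e.\ $\omega$ would be a Kähler-Einstein metric on $X$.

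Next I would invoke Lemma~\ref{lem: Kunstable}, which shows that $X$ is K-unstable (in fact divisorially unstable via $\beta_X(\tilde H)=-1/12<0$). By the Yau-Tian-Donaldson correspondence (as cited at the end of Lemma~\ref{lem: Kunstable}), K-unstable Fano manifolds do not admit Kähler-Einstein metrics. This contradicts the existence of the Kähler-Einstein metric $\omega$ produced in the previous step, completing the proof.

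There is essentially no obstacle beyond assembling the two lemmas; the only subtlety worth stating carefully is the Lie-algebra step, namely that a holomorphic vector field on a compact complex manifold integrates to a one-parameter subgroup of $\mathrm{Aut}(X)$, so a finite (and therefore discrete) automorphism group forces $H^0(X,T_X)=0$ and hence forces any soliton vector field to vanish. Once this is made explicit, the theorem follows immediately.
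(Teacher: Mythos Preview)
Your proposal is correct and follows essentially the same approach as the paper: both argue by contradiction, use Lemma~\ref{lem:auto} to force the soliton vector field to vanish (so that a soliton would be K\"ahler--Einstein), and then invoke Lemma~\ref{lem: Kunstable} together with Yau--Tian--Donaldson to derive the contradiction. Your version simply spells out the Lie-algebra step $\mathrm{Aut}(X)$ finite $\Rightarrow H^0(X,T_X)=0$ a bit more explicitly than the paper does.
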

\begin{proof}
This theorem directly follows from Lemma \ref{lem:auto} and Lemma \ref{lem: Kunstable}. We prove by contradiction, assuming that $X$ admits a Kähler-Ricci soliton, namely, there exists a Kähler metric $\omega$ in Kähler class $2\pi c_1(X)$ and a holomorphic vector field $\xi$ such that $\text{Ric}(\omega)-\omega = L_{\xi} \omega$. By Lemma \ref{lem:auto}, all holomorphic vector fields on $X$ must be trivial, so $\omega$ becomes a Kähler-Einstein metric, which contradicts 
that $X$ does not admit Kähler-Einstein metric from Lemma \ref{lem: Kunstable}. Therefore, we conclude that $X$ does not admit any Kähler-Ricci soliton.
\end{proof}

Next, we prove Theorem \ref{thm: mainthm} which is restated as follows:

\begin{theorem}
Any Fano threefold $X$ from Family No. 2.23 in  Mori-Mukai's list has type $\rm II$ solutions for the normalized Kähler-Ricci flow.
Namely, the Gromov-Hausdorff limit of Kähler-Ricci flow on $X$ is a singular $\mathbb{Q}$-Fano variety.
\end{theorem}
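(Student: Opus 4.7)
The plan is to deduce the theorem from Theorem \ref{thm:NoKRS} combined with the (now established) Hamilton-Tian conjecture. By \cite{Cao}, the normalized Kähler-Ricci flow (\ref{eq: KRF}) has a global solution $\omega(t)$ for $t \geq 0$, and by Conjecture \ref{thm:Hamilton-Tian} (now a theorem by \cite{TianZhang,Bamler,ChenWang,wangzhu,BLXZ}), along any sequence $t_i\to\infty$ some subsequence of $(X,\omega(t_i))$ converges in Gromov-Hausdorff topology to a $\mathbb{Q}$-Fano variety $(X_\infty,\omega_\infty)$ admitting a smooth shrinking Kähler-Ricci soliton on its regular locus. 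It therefore suffices to prove that $X_\infty$ is singular: if $\omega(t)$ were of type I, Hamilton's compactness theorem applied to uniform curvature bounds would produce a smooth compact shrinking Kähler-Ricci soliton as every subsequential limit, forcing $X_\infty$ to be smooth.

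I argue that $X_\infty$ is singular by contradiction, assuming $X_\infty$ is smooth. By the partial $C^0$-estimate of \cite{TianZhang} (see also \cite{ChenSunWang,HanLib}), the Gromov-Hausdorff limit is algebro-geometric: there is a flat one-parameter family $\mathcal{X}\to\Delta$ whose generic fiber is isomorphic to $X$ and whose central fiber is $X_\infty$. Smoothness of the central fiber, which is a Cartier divisor in $\mathcal{X}$, forces $\mathcal{X}$ to be smooth in a neighborhood of $X_\infty$, so Ehresmann's theorem exhibits $X_\infty$ as a smooth deformation of $X$, which places $X_\infty$ in the same Mori-Mukai deformation family as $X$, namely No.~2.23. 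However Conjecture \ref{thm:Hamilton-Tian} endows $X_\infty$ with a Kähler-Ricci soliton, directly contradicting Theorem \ref{thm:NoKRS} applied to $X_\infty$.

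Consequently the singular set of $X_\infty$ must be nonempty, so $\omega(t)$ is of type II and $X_\infty$ is a singular $\mathbb{Q}$-Fano variety, as required. The main obstacle in this plan is the algebro-geometric step linking a smooth Gromov-Hausdorff limit back to Family No.~2.23; it rests on the identification of the analytic limit with a flat degeneration of $X$ from \cite{TianZhang,ChenSunWang} together with the classical fact that a flat deformation over a disc with smooth central fiber is automatically a smooth deformation in a neighborhood of that fiber. Once this bridge is set up, Theorem \ref{thm:NoKRS} applied to $X_\infty$ supplies the contradiction and the remainder of the argument is immediate.
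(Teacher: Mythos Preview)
Your proposal is correct and follows essentially the same route as the paper's own proof: contradiction via the Hamilton--Tian conjecture, passage to a flat family with smooth central fiber, Ehresmann to identify $X_\infty$ as a member of Family No.~2.23, and then Theorem~\ref{thm:NoKRS} to close. The only cosmetic difference is that the paper pins down the family explicitly via the numerical invariants $b_2=b_3=2$ and $(-K_X)^3=30$, whereas you invoke the deformation-class definition directly; both are valid.
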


\begin{proof}
We prove by contradiction again, assuming that the Gromov-Hausdorff limit $X_{\infty}$ is smooth, then there exists a smooth deformation $\pi:\mathcal{X}\rightarrow \Delta$ such that $\pi$ is a smooth morphism and general fiber $\pi^{-1}(t)\cong X$ for $t\neq 0$ and $\pi^{-1}(0) = X_{\infty}$. Since
$\pi$ is a smooth morphism, by Ehresmann's theorem, we conclude $b_2(X)=b_2(X_{\infty})=2,b_3(X)=b_3(X_{\infty})=2$ and $\text{Vol}(X)=\text{Vol}(X_{\infty})=30$, from the geography of smooth Fano threefolds, we conclude that $X_{\infty}$ still belongs to Family No. 2.23. Since $X_\infty$ is smooth, it has been known for long that $X_{\infty}$ admits a Kahler-Ricci soliton, which contradicts with Theorem \ref{thm:NoKRS}. Therefore, the limit $X_{\infty}$ must be singular.
\end{proof}

\bibliographystyle{alpha}
\bibliography{ref}

\end{document}